\documentclass[a4, 12pt]{amsart}
\usepackage[mathscr]{eucal}
\usepackage{amssymb}
\usepackage{latexsym}
\usepackage{amsthm}
\usepackage{color}
\theoremstyle{plain}
\newtheorem{theorem}{Theorem}[section]

\newtheorem{remark}{Remark}[section]
\newtheorem{lemma}{Lemma}[section]

\setlength{\textwidth}{15cm}
\setlength{\textheight}{22.5cm}
\setlength{\evensidemargin}{3mm}
\setlength{\oddsidemargin}{3mm}
\makeatletter
\@addtoreset{equation}{section}

\title[Complete space-like self-expanders in the Minkovski space]
{Complete space-like self-expanders in the Minkovski space}

\author [Z. Li and G. Wei]{Zhi Li and Guoxin Wei}

\address{Zhi Li \\  \newline \indent College of Mathematics and Information Science, Henan Normal University,
\newline \indent 453007, Xinxiang, Henan, China.}
\email{lizhihnsd@126.com}

\address{Guoxin Wei \\ \newline \indent School of Mathematical Sciences, South China Normal University,
\newline \indent 510631, Guangzhou,  China.}
\email{weiguoxin@tsinghua.org.cn}

\begin{document}
\maketitle

\begin{abstract}
It is our purpose to study complete space-like self-expanders in the Minkovski space.
By use of maximum principle of Omori-Yau type, we
can obtain the rigidity theorems on $n$-dimensional complete space-like self-expanders in the Minkovski space $\mathbb R^{n+1}_{1}$. For complete space-like self-expanders of dimension $2$, we give a classification of them under assumption of constant squared norm of the second fundamental form.
\end{abstract}

\footnotetext{2020 \textit{Mathematics Subject Classification}:
53E10, 53C40.}
\footnotetext{{\it Key words and phrases}: mean curvature flow,
 self-expander, maximum principle.}

\section{introduction}
\vskip2mm
\noindent

An $n$-dimensional smooth immersed hypersurface $x: M^{n}\to \mathbb{R}^{n+1}_{1}$ is called a self-expanders of mean curvature flow if its mean curvature vector $\vec{H}$ satisfies the equation
\begin{equation}\label{1.1-1}
\vec{H}=x^{\perp},
\end{equation}
where $\vec{H}=H e_{n+1}$, $x$ and $e_{n+1}$ denote the position vector and the inward unit normal field, respectively.
If we denote by $\langle\cdot, \cdot\rangle$ the standard inner product on $\mathbb{R}^{n+1}_{1}$, then the equation \eqref{1.1-1} is equivalent to
\begin{equation}\label{1.1-2}
H=-\langle x, e_{n+1}\rangle, \ \ \langle e_{n+1}, e_{n+1}\rangle=-1.
\end{equation}

Self-expanders are self similar solutions of mean curvature flow,
that is, the family of hypersurfaces $x_{t}=\sqrt{2t}x, \ \ t>0$ satisfying the equation of mean curvature flow.
Self-expanders are important as they model the behavior of a mean curvature flow
coming out of a conical singularity (\cite{AIC}), and also model the long time behaviors of the flows starting from entire graphs (\cite{Guo}). There are many classification results about the self-expanders.
Ishimura \cite{Ish} and Halldorsson \cite{H} have classified self-expander curves in $\mathbb R^{2}$. In \cite{H}, Halldorsson states that each of the complete self-expander curves immersed in $\mathbb R^{2}$ is convex, properly embedded and asymptotic to the boundary of a cone with vertex at the origin. In 2018, Cheng and Zhou \cite{CZ} studied some properties of complete properly immersed self-expanders. Recently, Ancari and Cheng \cite{AC} studied immersed self-expander hypersurfaces whose mean curvatures have some linear growth controls and obtained some rigidity property of hyperplanes as self-expanders in Euclidean space. More other results about the self-expanders
have been done (cf. \cite{BW}, \cite{EH}, \cite{D}, \cite{FM}, \cite{Smo}, \cite{Sta}, \cite{XY}).

In this paper, by use of maximum principle of Omori-Yau type,
we obtain the classification of complete space-like self-expanders in the Minkowski space $\mathbb R^{n+1}_{1}$.

\begin{theorem}\label{theorem 1.1}
Let $x:M^{n}\rightarrow \mathbb{R}^{n+1}_{1}$
be a complete space-like self-expander with the constant squared norm $S$ of the second fundamental form. If $\inf H^{2}>0$, then $S=1$ and $x(M^{n})$ is the hyperbolic space $\mathbb{H}^{n}(\sqrt{n})$ or the hyperbolic cylinder $\mathbb{H}^{k}(\sqrt{k})\times\mathbb{R}^{n-k}, \ \ 1\leq k\leq n-1$, where $h_{ij}$ are components of the second fundamental form, $S=\sum_{i,j}\limits (h_{ij})^2$.
\end{theorem}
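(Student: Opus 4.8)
The plan is to run a Simons-type computation for the drift Laplacian adapted to the expander structure, extract a pointwise identity from the constancy of $S$, and then feed the resulting equation for $H$ into an Omori--Yau argument. Throughout I would work with the operator $\mathcal{L} = \Delta + \langle x^{T}, \nabla\,\cdot\,\rangle$, where $x^{T}$ is the tangential part of the position vector; a direct computation gives $\mathcal{L}x = x$, so $\mathcal{L}$ is the natural weighted Laplacian for \eqref{1.1-1}. First I would record that, since the hypersurface is space-like with time-like unit normal, the Gauss equation reads $R_{ijkl} = -(h_{ik}h_{jl}-h_{il}h_{jk})$, whence $R_{ik} = -Hh_{ik} + (h^{2})_{ik}$. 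Because $S$ is constant and $H^{2}\le nS$ by Cauchy--Schwarz, both $H$ and all $h_{ij}$ are bounded, so $\mathrm{Ric}$ and $\mathrm{Hess}\,f$ (with $\nabla f = x^{T}$, i.e.\ $f=\tfrac12\langle x,x\rangle$) are bounded; this is exactly what is needed for the Omori--Yau maximum principle for $\mathcal{L}$ to be available on the complete manifold $M$.

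The computational heart is a pair of evolution identities. Differentiating \eqref{1.1-2} gives $\nabla_{i}H = -\sum_{j}h_{ij}x_{j}$ with $x_{j}=\langle x,e_{j}\rangle$, together with $\nabla_{k}x_{j}=\delta_{kj}-Hh_{kj}$; combining these with the contracted Codazzi equation and the Minkowski Simons identity $\Delta h_{ij} = \nabla_{i}\nabla_{j}H - H(h^{2})_{ij} + Sh_{ij}$ yields, after the first-order drift terms cancel,
\[
\mathcal{L}h_{ij} = (S-1)h_{ij}, \qquad \mathcal{L}H = (S-1)H.
\]
Contracting the first with $h_{ij}$ and using $\tfrac12\Delta S = \sum_{ij} h_{ij}\Delta h_{ij} + |\nabla A|^{2}$ then gives the key identity $\tfrac12\mathcal{L}S = |\nabla A|^{2} + S(S-1)$. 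Since $S$ is constant the left-hand side vanishes, so $|\nabla A|^{2} = S(1-S)\ge 0$, forcing $0<S\le 1$ (the value $S=0$ is excluded because it would give $H\equiv0$, against $\inf H^{2}>0$).

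It remains to upgrade $S\le1$ to $S=1$, and here the hypothesis $\inf H^{2}>0$ enters. Since $H$ never vanishes it has a fixed sign, say $H>0$ with $\inf H>0$. Applying the Omori--Yau principle for $\mathcal{L}$ to the bounded-below function $H$ produces points $p_{k}$ with $H(p_{k})\to\inf H$ and $\liminf_{k}\mathcal{L}H(p_{k})\ge0$; but $\mathcal{L}H=(S-1)H$ gives $\mathcal{L}H(p_{k})\to(S-1)\inf H$, so $(S-1)\inf H\ge0$ and hence $S\ge1$. Therefore $S=1$ and $|\nabla A|^{2}=0$, i.e.\ the second fundamental form is parallel.

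Finally I would invoke the classification of complete space-like hypersurfaces with parallel second fundamental form in $\mathbb{R}^{n+1}_{1}$: the principal curvatures are constant and the manifold splits isometrically, so $x(M^{n})$ is totally umbilic or a Riemannian product of a hyperbolic factor with a Euclidean one. Imposing the self-expander normalization $H=-\langle x,e_{n+1}\rangle$ fixes the radii, yielding $\mathbb{H}^{n}(\sqrt{n})$ in the umbilic case or $\mathbb{H}^{k}(\sqrt{k})\times\mathbb{R}^{n-k}$, $1\le k\le n-1$, each with $S=1$. The step I expect to be most delicate is not the algebra but the analytic input: justifying the Omori--Yau maximum principle for the drift operator $\mathcal{L}$ on the noncompact $M$, i.e.\ controlling the first-order drift term $\langle x^{T},\nabla H\rangle$ at the almost-extremal points, which is where completeness together with the curvature bounds coming from $S=\mathrm{const}$ must be used carefully.
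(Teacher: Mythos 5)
Your proposal is correct and follows essentially the same route as the paper: the same drift operator $\mathcal{L}$, the same identities $\mathcal{L}H=(S-1)H$ and $\tfrac12\mathcal{L}S=|\nabla A|^{2}+S(S-1)$, the Omori--Yau principle (available here by the Chen--Qiu version, since $S$ constant makes the Bakry--Emery Ricci tensor bounded below, exactly the paper's Lemma \ref{lemma 2.4}) applied to $H$ to force $S\geq 1$, hence $S=1$ and parallel second fundamental form. The only cosmetic difference is at the end, where the paper pins down the principal curvatures via $\lambda_{i}(H\lambda_{i}-1)=0$ and cites the Abe--Koike--Yamaguchi congruence theorem rather than a general parallel-second-fundamental-form classification.
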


When $n=2$, we completely classify  complete self-expanders with constant squared norm of the second fundamental form in the Minkovski space $\mathbb R^{3}_{1}$.
The study is as follows:

\begin{theorem}\label{theorem 1.2}
Let $x: M^{2}\to \mathbb{R}^{3}_{1}$ be a
$2$-dimensional complete space-like self-expander in $\mathbb R^{3}_{1}$.
If the squared norm $S$ of the second fundamental form is constant, then $x: M^{2}\to \mathbb{R}^{3}_{1}$ is one of the space-like affine plane
$\mathbb {R}^{2}$ through the origin, the hyperbolic
cylinder $\mathbb{H}^{1}(1)\times\mathbb {R}^{1}$ or the hyperbolic space $\mathbb{H}^{2}(\sqrt{2})$, where $h_{ij}$ are components of the second fundamental form, $S=\sum_{i,j}\limits (h_{ij})^2$.
\end{theorem}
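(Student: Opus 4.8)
The plan is to reduce everything to two weighted elliptic identities for the drift Laplacian $\mathcal{L}=\Delta+\langle x^{\top},\nabla\,\cdot\,\rangle$, where $x^{\top}$ denotes the tangential part of the position vector. First I would record, from the self-expander equation \eqref{1.1-2} together with the Codazzi and Gauss equations of a space-like surface (so that the Gauss equation reads $R=S-H^{2}$, i.e.\ $K=\tfrac12(S-H^{2})$ when $n=2$), the two formulas
\[
\mathcal{L}H=(S-1)H,\qquad \tfrac12\mathcal{L}S=|\nabla A|^{2}+S(S-1).
\]
Since $S$ is constant the second identity collapses to $|\nabla A|^{2}=S(1-S)$, a nonnegative constant; hence $0\le S\le 1$ and $|\nabla A|^{2}$ is itself constant. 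Two a priori bounds then come for free and are used throughout: the traceless part $\phi=A-\tfrac12Hg$ satisfies $|\phi|^{2}=S-\tfrac12H^{2}\ge 0$, so $H^{2}\le 2S$ is bounded, whence $K=\tfrac12(S-H^{2})\ge-\tfrac12S$ is bounded below. Thus the Ricci curvature is bounded below and the Omori--Yau maximum principle, in the form already used for Theorem~\ref{theorem 1.1}, is available on $M^{2}$.

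Next I would split into the three cases $S=0$, $0<S<1$, $S=1$. If $S=0$ then $A\equiv 0$, the surface is totally geodesic, hence a space-like affine plane, and the expander condition $H=-\langle x,e_{3}\rangle=0$ forces it through the origin, giving $\mathbb{R}^{2}$. If $S=1$ then $|\nabla A|^{2}=0$, so $A$ is parallel; I would then invoke the classification of complete space-like surfaces with parallel second fundamental form in $\mathbb{R}^{3}_{1}$ (open pieces of $\mathbb{R}^{2}$, of a totally umbilic $\mathbb{H}^{2}(r)$, or of a product $\mathbb{H}^{1}(r)\times\mathbb{R}$), discard the flat case since $S=1\ne 0$, and fix the radii from \eqref{1.1-2}: the expander equation gives $-\langle x,e_{3}\rangle=r$ while the mean curvature equals $2/r$ for $\mathbb{H}^{2}(r)$ and $1/r$ for the cylinder, forcing $r=\sqrt2$ and $r=1$ respectively, yielding exactly $\mathbb{H}^{2}(\sqrt2)$ and $\mathbb{H}^{1}(1)\times\mathbb{R}$.

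The whole weight of the theorem therefore rests on excluding the range $0<S<1$, in which $|\nabla A|^{2}=S(1-S)>0$ is a positive constant and $A$ is nowhere parallel. Here I would differentiate once more and compute a Simons-type identity for $\mathcal{L}|\nabla A|^{2}$; because $|\nabla A|^{2}$ is literally a constant, its left-hand side vanishes identically, leaving a pointwise relation of the schematic form
\[
0=\tfrac12\mathcal{L}|\nabla A|^{2}=|\nabla^{2}A|^{2}+Q,
\]
where $Q$ is algebraic in $A,\nabla A,H,\nabla H$. Using $|\nabla A|^{2}=S(1-S)$, the bound $H^{2}\le 2S$, the Cayley--Hamilton relation $\operatorname{tr}(h^{3})=\tfrac12H(3S-H^{2})$, and $\mathcal{L}H=(S-1)H$, the aim is to show $Q\ge c\,S(1-S)$ for a positive constant $c$ (or at least $\sup(-Q)\le 0$ via Omori--Yau). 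Then $|\nabla^{2}A|^{2}\le 0$ forces $\nabla^{2}A\equiv 0$ and $S(1-S)=0$, contradicting $0<S<1$.

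The main obstacle is exactly the sign bookkeeping in this last step: $Q$ carries indefinite cross terms involving $\nabla H$ and the non-constant quantity $H$, and making its sign favourable is where the dimension $n=2$ must be used in full, through $K=\tfrac12(S-H^{2})$ and the refined Kato inequality $|\nabla A|^{2}\ge\tfrac34|\nabla H|^{2}$, which bounds $|\nabla H|$ by the constant $|\nabla A|^{2}$. Once $0<S<1$ is excluded, the three cases exhaust all possibilities and the stated classification follows. I expect the two first-order identities to be routine given the preceding sections, the analysis for $S\in\{0,1\}$ to be short, and essentially all the difficulty to lie in controlling $Q$ in the third-order identity.
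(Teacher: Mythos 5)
Your reduction is fine as far as it goes: from $\tfrac12\mathcal{L}S=|\nabla A|^{2}+S(S-1)$ and $S$ constant you correctly get $|\nabla A|^{2}=S(1-S)$, hence $0\le S\le 1$, and the two endpoint cases $S=0$ (totally geodesic plane through the origin) and $S=1$ (parallel second fundamental form, then the Abe--Koike--Yamaguchi-type classification and the radius computation) are handled essentially as in the paper. The genuine gap is the exclusion of $0<S<1$, which is the entire content of the theorem beyond Theorem~\ref{theorem 1.1}, and which you explicitly leave as an ``aim'': you never establish the sign condition $Q\ge c\,S(1-S)$ (or $\sup(-Q)\le 0$) in your proposed identity $0=|\nabla^{2}A|^{2}+Q$. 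Worse, the identity you would obtain is exactly the paper's \eqref{2.1-14}, and there the second-derivative term $\sum_{i,j,k,l}(h_{ijkl})^{2}$ sits on the left with the manifestly \emph{positive} term $(2-S)\sum_{i,j,k}(h_{ijk})^{2}=(2-S)S(1-S)$ on the right, together with indefinite cubic contractions of $\nabla A$ with $A$; a pointwise sign argument of the kind you describe does not close, and bounding $|\nabla H|$ by the Kato inequality does not control the indefinite cross terms $6\sum h_{ijk}h_{il}h_{jp}h_{klp}-3\sum h_{ijk}h_{ijl}h_{kp}h_{lp}$, whose sign depends on the eigenvalue configuration of $A$.

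The paper's route is structurally different at precisely this point and shows why a pointwise argument is not enough. It first proves (Theorem~\ref{theorem 3.2}) that $S\ne 0$ constant forces $\inf H^{2}>0$, by taking a minimizing sequence $\{p_{t}\}$ for $H^{2}$, passing to limits of $h_{ij},h_{ijk},h_{ijkl}$, and using the vanishing of $\bar H$ to force $\bar\lambda_{1}=-\bar\lambda_{2}$; only with this extra information (plus the relations \eqref{3.1-4}--\eqref{3.1-9} coming from $\nabla H$ and $\nabla S$ along the sequence) do the cross terms become computable, and the contradiction is a delicate numerical one ($\tfrac{11}{27}\ne\tfrac{7}{27}$ after pinning down $S=\tfrac13$). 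Once $\inf H^{2}>0$ is known, Omori--Yau applied to $-H$ gives $\inf H\,(S-1)\ge 0$, hence $S\ge 1$, which combined with $S(1-S)=|\nabla A|^{2}\ge 0$ yields $S=1$. If you want to salvage your outline, you should replace the hoped-for pointwise positivity of $Q$ by this two-step argument (or supply an actual proof of the sign estimate, which the structure of \eqref{2.1-14} suggests is not available).
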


\begin{remark}
In fact, using the proof method of Theorem \ref{theorem 1.1}, we can also give the classification of complete self-expanders in the Euclidean space $\mathbb{R}^{3}$ under the condition of the constant squared norm of the second fundamental form, and the classification result is only plane $\mathbb {R}^{2}$ through the origin,
which indicates that the conclusion of Ancari and Cheng \cite{AC} still holds
without assumption on the scalar curvature.
\end{remark}

\vskip5mm
\section {Preliminaries}
\vskip2mm

\noindent

Let $x: M^{n} \rightarrow\mathbb{R}^{n+1}_{1}$ be an
$n$-dimensional space-like hypersurface of $(n+1)$-dimensional  Minkovski space $\mathbb{R}^{n+1}_{1}$. We choose a local orthonormal frame field
$\{e_{A}\}_{A=1}^{n+1}$ in $\mathbb{R}^{n+1}_{1}$ with dual coframe field
$\{\omega_{A}\}_{A=1}^{n+1}$, such that, restricted to $M^{n}$,
$e_{1},\cdots, e_{n}$  are tangent to $M^{n}$. In this paper, we shall make use of the following conventions on the ranges of indices,
$$
 1\leq i,j,k,l\leq n.
$$
Then we have  the structure equations
\begin{equation*}
dx=\sum_i\limits \omega_{i} e_{i}, \ \ de_{i}=\sum_{j}\limits \omega_{ij}e_{j}+\omega_{i n+1}e_{n+1},
\end{equation*}
\begin{equation*}
de_{n+1}=\omega_{n+1 i}e_{i}, \ \ \omega_{n+1i}=\omega_{in+1}
\end{equation*}
where $\omega_{ij}$ is the Levi-Civita connection of the hypersurface.

\noindent By  restricting  these forms to $M$,  we get
\begin{equation}\label{2.1-1}
\omega_{n+1}=0.
\end{equation}

\noindent Taking exterior derivatives of \eqref{2.1-1}, we obtain
\begin{equation}\label{2.1-2}
\omega_{in+1}=\sum_{j} h_{ij}\omega_{j},\quad
h_{ij}=h_{ji}.
\end{equation}

$$
h=\sum_{i,j}h_{ij}\omega_i\otimes\omega_j, \ \ H= \sum_i\limits h_{ii}
$$
are called the second fundamental form and the mean curvature of $x: M\rightarrow\mathbb{R}^{n+1}_{1}$, respectively.
Let $S=\sum_{i,j}\limits (h_{ij})^2$ be  the squared norm
of the second fundamental form  of $x: M\rightarrow\mathbb{R}^{n+1}_{1}$.
The Gauss equations are given by
\begin{equation}\label{2.1-3}
R_{ijkl}=-(h_{ik}h_{jl}-h_{il}h_{jk}).
\end{equation}

\noindent
Defining the
covariant derivative of $h_{ij}$ by
\begin{equation}\label{2.1-4}
\sum_{k}h_{ijk}\omega_k=dh_{ij}+\sum_kh_{ik}\omega_{kj}
+\sum_k h_{kj}\omega_{ki},
\end{equation}
we obtain the Codazzi equations
\begin{equation}\label{2.1-5}
h_{ijk}=h_{ikj}.
\end{equation}
By taking exterior differentiation of \eqref{2.1-4}, and
defining
\begin{equation}\label{2.1-6}
\sum_lh_{ijkl}\omega_l=dh_{ijk}+\sum_lh_{ljk}\omega_{li}
+\sum_lh_{ilk}\omega_{lj}+\sum_l h_{ijl}\omega_{lk},
\end{equation}
we have the following Ricci identities
\begin{equation}\label{2.1-7}
h_{ijkl}-h_{ijlk}=\sum_m
h_{mj}R_{mikl}+\sum_m h_{im}R_{mjkl}.
\end{equation}
Defining
\begin{equation}\label{2.1-8}
\begin{aligned}
\sum_mh_{ijklm}\omega_m&=dh_{ijkl}+\sum_mh_{mjkl}\omega_{mi}
+\sum_mh_{imkl}\omega_{mj}+\sum_mh_{ijml}\omega_{mk}\\
&\ \ +\sum_mh_{ijkm}\omega_{ml}
\end{aligned}
\end{equation}
and taking exterior differentiation of \eqref{2.1-6}, we get
\begin{equation}\label{2.1-9}
\begin{aligned}
h_{ijklq}-h_{ijkql}&=\sum_{m} h_{mjk}R_{milq}
+ \sum_{m}h_{imk}R_{mjlq}+ \sum_{m}h_{ijm}R_{mklq}.
\end{aligned}
\end{equation}

Let $V$ be a tangent $C^{1}$-vector field on $M^{n}$ and denote by $Ric_{V} := Ric-\frac{1}{2}L_{V}g$ the
Bakry-Emery Ricci tensor with $L_{V}$ to be the Lie derivative along the vector field $V$. Define a
differential operator
\begin{equation*}
\mathcal{L}f=\Delta f+\langle V,\nabla f\rangle,
\end{equation*}
where $\Delta$ and $\nabla$ denote the Laplacian and the gradient
operator, respectively. The following maximum principle
of Omori-Yau type concerning the operator $\mathcal{L}$ will
be used in this paper, which was proved by Chen and Qiu \cite{CQ}.

\begin{lemma}\label{lemma 2.1}
Let $(M^{n}, g)$ be a complete Riemannian manifold, and $V$ is a $C^{1}$ vector field on $M^{n}$. If the Bakry-Emery Ricci tensor $Ric_{V}$ is bounded from below, then for any $f\in C^{2}(M^{n})$ bounded from above, there exists a sequence $\{p_{t}\} \subset M^{n}$, such that
\begin{equation*}
\lim_{m\rightarrow\infty} f(p_{t})=\sup f,\quad
\lim_{m\rightarrow\infty} |\nabla f|(p_{t})=0,\quad
\lim_{m\rightarrow\infty}\mathcal{L}f(p_{t})\leq 0.
\end{equation*}
\end{lemma}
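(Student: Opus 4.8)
The plan is to establish this $\mathcal{L}$-version of the Omori--Yau maximum principle by adapting the classical distance-function argument of Omori, Yau and Calabi, with the Bakry--Emery tensor $Ric_{V}$ playing the role of the ordinary Ricci tensor. The conceptual engine is the drift Bochner formula: for every $u\in C^{3}(M^{n})$,
\[
\tfrac12\,\mathcal{L}|\nabla u|^{2}=|\mathrm{Hess}\,u|^{2}+\langle\nabla u,\nabla\mathcal{L}u\rangle+Ric_{V}(\nabla u,\nabla u),
\]
which follows from the ordinary Bochner identity together with the relation $(L_{V}g)(X,X)=2\langle\nabla_{X}V,X\rangle$. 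The hypothesis $Ric_{V}\ge -C$ is tailored precisely so that the curvature term here is bounded below, exactly as a Ricci lower bound is used in Yau's original proof.

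First I would fix a point $o\in M^{n}$, write $r(x)=\mathrm{dist}(x,o)$, and apply the identity to $u=r$ along a minimizing geodesic $\gamma$ issuing from $o$. Using $\nabla_{\partial_{r}}\partial_{r}=0$ and $|\mathrm{Hess}\,r|^{2}\ge (\Delta r)^{2}/(n-1)$, the terms recombine into the Riccati-type inequality
\[
\partial_{r}(\mathcal{L}r)=-|\mathrm{Hess}\,r|^{2}-Ric_{V}(\partial_{r},\partial_{r})\le -\tfrac{1}{n-1}(\Delta r)^{2}+C\le C .
\]
The decisive feature is that the derivative of the drift term $\langle V,\partial_{r}\rangle$ is absorbed into $Ric_{V}$, so that no separate bound on $V$ is needed. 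Integrating outward from a small geodesic sphere $\{r=r_{0}\}$, on which $\mathcal{L}r$ is bounded because $r$ is smooth and $V$ is continuous there, gives $\mathcal{L}r\le \Lambda(1+r)$ off a neighbourhood of $o$. Since $\mathcal{L}(r^{2})=2r\,\mathcal{L}r+2|\nabla r|^{2}$, the function $\varphi:=\log(1+r^{2})$ is then a proper exhaustion with $|\nabla\varphi|\le 1$ and $\mathcal{L}\varphi\le \Lambda'$ outside a compact set.

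With $\varphi$ at hand the conclusion follows by the standard perturbation scheme. Given $f\in C^{2}(M^{n})$ with $f^{\ast}=\sup f<\infty$, for each $\sigma>0$ the function $f-\sigma\varphi$ is bounded above and, by properness of $\varphi$ together with completeness of $M^{n}$, has compact superlevel sets; hence it attains its maximum at some $q_{\sigma}$. At $q_{\sigma}$ one has $\nabla f=\sigma\nabla\varphi$ and $\Delta(f-\sigma\varphi)\le 0$, whence
\[
|\nabla f|(q_{\sigma})\le \sigma,\qquad \mathcal{L}f(q_{\sigma})\le \sigma\,\mathcal{L}\varphi(q_{\sigma})\le \sigma\Lambda'.
\]
A routine comparison shows $f(q_{\sigma})\to f^{\ast}$ as $\sigma\to0$, because the perturbation $\sigma\varphi$ is negligible at points where $f$ is close to its supremum. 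Choosing $\sigma=\sigma_{t}\to0$ and setting $p_{t}=q_{\sigma_{t}}$ produces a sequence with $f(p_{t})\to\sup f$, $|\nabla f|(p_{t})\to0$ and $\limsup_{t}\mathcal{L}f(p_{t})\le0$, as required.

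The main obstacle is the non-smoothness of $r$ across the cut locus of $o$, which makes the pointwise Riccati inequality illegitimate there. I would handle this by Calabi's trick: replacing $r$ by the smooth upper barrier $r_{\varepsilon}(x)=\varepsilon+\mathrm{dist}\big(x,\gamma(\varepsilon)\big)$ built from an interior point $\gamma(\varepsilon)$ of the minimizing geodesic, for which the comparison holds classically and which satisfies $r_{\varepsilon}\ge r$ with equality along $\gamma$; the estimate for $\mathcal{L}\varphi$ then holds in the barrier (support) sense that is all the maximum-principle argument needs. Verifying that this weakened inequality still drives the perturbation argument, and checking the uniform bound of $\mathcal{L}r$ on $\{r=r_{0}\}$, are the two points demanding the most care; everything else is bookkeeping around the drift Bochner identity.
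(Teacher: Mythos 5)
The paper gives no proof of Lemma \ref{lemma 2.1}; it is quoted verbatim from Chen and Qiu \cite{CQ}. Your argument is the standard one for this drifted Omori--Yau principle and is correct in all essentials: the traced Riccati equation combined with $\partial_r\langle V,\partial_r\rangle=\tfrac12 (L_Vg)(\partial_r,\partial_r)$ does make $Ric_V$ the only curvature quantity entering the comparison, yielding $\mathcal{L}r\le\Lambda(1+r)$ in the barrier sense, and the exhaustion $\varphi=\log(1+r^2)$ with $|\nabla\varphi|\le1$, $\mathcal{L}\varphi\le\Lambda'$ then runs the usual perturbation scheme (note that at the maximum of $f-\sigma\varphi$ one has $\nabla(f-\sigma\varphi)=0$, so the drift term contributes nothing and $\mathcal{L}f\le\sigma\mathcal{L}\varphi$ as you claim). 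The two points you flag --- Calabi's barrier across the cut locus and the uniform bound of $\mathcal{L}r_\varepsilon$ on the small initial spheres, which is uniform because the relevant base points $\gamma(\varepsilon)$ stay in a fixed compact neighbourhood of $o$ --- are indeed the only delicate steps, and both close in the standard way; this is essentially the proof given in \cite{CQ}.
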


We next suppose that $x: M\rightarrow\mathbb{R}^{n+1}_{1}$
is a self-expander, that is, $H=-\langle x, e_{n+1}\rangle$. By a simple calculation, we have the following basic formulas.

\begin{equation}\label{2.1-10}
\aligned
\nabla_{i}H
=-\sum_{k}h_{ik}\langle x, e_{k}\rangle, \ \
\nabla_{j}\nabla_{i}H
=-\sum_{k}h_{ijk}\langle x, e_{k}\rangle-h_{ij}+H\sum_{k}h_{ik}h_{kj}.
\endaligned
\end{equation}

In this paper, we shall always take $V=x^{\top}$ for any formulas.
Using \eqref{2.1-10} and the Ricci identities, we can get the following lemmas.

\begin{lemma}\label{lemma 2.2}
Let $x:M^{n}\rightarrow \mathbb{R}^{n+1}_{1}$ be an $n$-dimensional complete space-like self-expander in $\mathbb R^{n+1}_{1}$. We have
\begin{equation}\label{2.1-11}
\mathcal{L}H=H(S-1),
\end{equation}
\begin{equation}\label{2.1-12}
\aligned
\frac{1}{2}\mathcal{L}
H^{2}=|\nabla H|^{2}+H^{2}(S-1),
\endaligned
\end{equation}
\begin{equation}\label{2.1-13}
\frac{1}{2}\mathcal{L}S
=\sum_{i,j,k}h_{ijk}^{2}+S(S-1).
\end{equation}
\end{lemma}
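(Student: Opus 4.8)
The plan is to verify the three identities in order, deriving each from the basic formulas \eqref{2.1-10}, the Codazzi equations \eqref{2.1-5}, the Ricci identities \eqref{2.1-7} and the Gauss equations \eqref{2.1-3}, together with the choice $V=x^{\top}=\sum_{k}\langle x,e_{k}\rangle e_{k}$. Throughout I will use that the third covariant derivative $h_{ijk}$ is totally symmetric (this follows from the symmetry $h_{ij}=h_{ji}$ in \eqref{2.1-2} and the Codazzi equation \eqref{2.1-5}), together with the identities $\sum_{i}h_{ii}=H$, $\sum_{i,k}h_{ik}h_{ki}=S$ and $\sum_{i}h_{iik}=\nabla_{k}H$.

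For \eqref{2.1-11} I would set $i=j$ in the second formula of \eqref{2.1-10} and sum over $i$. Using $\sum_{i}h_{iik}=\nabla_{k}H$, $\sum_{i}h_{ii}=H$ and $\sum_{i,k}h_{ik}h_{ki}=S$, this gives
\[
\Delta H=-\sum_{k}\langle x,e_{k}\rangle\nabla_{k}H-H+HS.
\]
Since $\langle V,\nabla H\rangle=\sum_{k}\langle x,e_{k}\rangle\nabla_{k}H$, the first-order terms cancel when I add $\langle V,\nabla H\rangle$, leaving $\mathcal{L}H=H(S-1)$. The identity \eqref{2.1-12} then follows at once from the product rule $\mathcal{L}(H^{2})=2H\mathcal{L}H+2|\nabla H|^{2}$ and substitution of \eqref{2.1-11}.

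The substantive step is \eqref{2.1-13}, for which I would first establish a Simons-type formula for the rough Laplacian of the second fundamental form. Starting from $\Delta h_{ij}=\sum_{m}h_{ijmm}$, I would use total symmetry of $h_{ijm}$ to rewrite $h_{ijmm}=h_{imjm}$, commute the last two derivative indices via the Ricci identity \eqref{2.1-7}, and recognise that $\sum_{m}h_{im;mj}=\nabla_{j}\nabla_{i}H$. The curvature terms are then evaluated with the Gauss equation \eqref{2.1-3}; a short computation shows that the two contributions combine to $Sh_{ij}-H(h^{2})_{ij}$, where $(h^{2})_{ij}=\sum_{k}h_{ik}h_{kj}$. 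Inserting the expression for $\nabla_{j}\nabla_{i}H$ from \eqref{2.1-10}, the cubic terms $H(h^{2})_{ij}$ cancel and I obtain
\[
\Delta h_{ij}=-\sum_{k}h_{ijk}\langle x,e_{k}\rangle+(S-1)h_{ij}.
\]
Finally, from $\tfrac12\Delta S=\sum_{i,j,k}h_{ijk}^{2}+\sum_{i,j}h_{ij}\Delta h_{ij}$ and the identity $\sum_{i,j}h_{ij}h_{ijk}=\tfrac12\nabla_{k}S$, the term involving $\langle x,e_{k}\rangle$ becomes $-\tfrac12\langle V,\nabla S\rangle$; adding $\tfrac12\langle V,\nabla S\rangle$ converts $\tfrac12\Delta S$ into $\tfrac12\mathcal{L}S$ and yields \eqref{2.1-13}.

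I expect the main obstacle to be the bookkeeping in the Simons-type computation, in particular keeping track of the signs coming from the Lorentzian Gauss equation \eqref{2.1-3} (note the overall minus sign relative to the Riemannian case) and verifying that the cubic curvature terms cancel exactly against the $H(h^{2})_{ij}$ term produced by $\nabla_{j}\nabla_{i}H$. Everything else is a direct, if lengthy, index manipulation.
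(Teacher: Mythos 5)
Your proposal is correct and follows exactly the route the paper indicates (the paper omits the details, saying only that the lemma follows from \eqref{2.1-10} and the Ricci identities): trace the second formula of \eqref{2.1-10} for $\mathcal{L}H$, use the product rule for $\mathcal{L}H^{2}$, and derive the Simons-type identity $\Delta h_{ij}=-\sum_{k}h_{ijk}\langle x,e_{k}\rangle+(S-1)h_{ij}$ for $\mathcal{L}S$. I checked the Lorentzian sign bookkeeping in your Simons computation — the curvature terms do combine to $Sh_{ij}-H(h^{2})_{ij}$ and the cubic terms cancel as you claim — so the argument is complete.
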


\begin{lemma}\label{lemma 2.3}
Let $x:M^{n}\rightarrow \mathbb{R}^{n+1}_{1}$ be an $n$-dimensional complete space-like self-expander in $\mathbb R^{n+1}_{1}$. If $S$ is constant, we have
\begin{equation}\label{2.1-14}
\aligned
\sum_{i,j,k,l}(h_{ijkl})^{2}
=&(2-S)\sum_{i,j,k}(h_{ijk})^{2}+6\sum_{i,j,k,l,p}h_{ijk}h_{il}h_{jp}h_{klp}\\
&-3\sum_{i,j,k,l,p}h_{ijk}h_{ijl}h_{kp}h_{lp}.
\endaligned
\end{equation}
\end{lemma}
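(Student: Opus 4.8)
The plan is to differentiate the Simons-type identity for the second fundamental form and then contract, reducing everything to a single computation of $\mathcal{L}h_{ijk}$. First I would record the main consequence of $S$ being constant: since then $\mathcal{L}S=0$, equation \eqref{2.1-13} forces $\sum_{i,j,k}h_{ijk}^{2}=S(1-S)$, so $\sum_{i,j,k}h_{ijk}^{2}$ is itself a constant function on $M^{n}$ and hence $\mathcal{L}\big(\sum_{i,j,k}h_{ijk}^{2}\big)=0$. Applying the tensorial product rule to $\mathcal{L}=\Delta+\langle V,\nabla\,\cdot\,\rangle$ with $V=x^{\top}$ gives
\begin{equation*}
0=\frac{1}{2}\mathcal{L}\Big(\sum_{i,j,k}h_{ijk}^{2}\Big)=\sum_{i,j,k,l}(h_{ijkl})^{2}+\sum_{i,j,k}h_{ijk}\,\mathcal{L}h_{ijk},
\end{equation*}
so that $\sum_{i,j,k,l}(h_{ijkl})^{2}=-\sum_{i,j,k}h_{ijk}\,\mathcal{L}h_{ijk}$. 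The entire statement therefore reduces to computing $\mathcal{L}h_{ijk}$ and contracting it with $h_{ijk}$.

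Second, I would establish the Simons-type identity for $h_{ij}$. Starting from $\Delta h_{ij}=\sum_{k}h_{ijkk}$, the Codazzi equations \eqref{2.1-5} rewrite this as $\sum_k h_{ikjk}$, and the Ricci identity \eqref{2.1-7} together with the Gauss equation \eqref{2.1-3} produce $\Delta h_{ij}=\nabla_i\nabla_j H+S h_{ij}-H\sum_k h_{ik}h_{kj}$. Substituting the expression for $\nabla_i\nabla_j H$ from \eqref{2.1-10}, the terms $H\sum_k h_{ik}h_{kj}$ cancel and one is left with $\Delta h_{ij}=-\sum_k h_{ijk}\langle x,e_k\rangle+(S-1)h_{ij}$, that is,
\begin{equation*}
\mathcal{L}h_{ij}=(S-1)h_{ij}.
\end{equation*}

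Third, since $S$ is constant I would differentiate this identity in the direction $e_{k}$, using $\nabla_{k}\langle x,e_{m}\rangle=\delta_{km}-Hh_{km}$ (which is consistent with \eqref{2.1-10}) and commuting the resulting third- and fifth-order covariant derivatives by means of the Ricci identities \eqref{2.1-7} and \eqref{2.1-9}. Writing $Q_{ijkl}:=\sum_{m}h_{mj}R_{mikl}+\sum_{m}h_{im}R_{mjkl}$ for the curvature term supplied by \eqref{2.1-7}, this yields an expression for $\mathcal{L}h_{ijk}$ consisting of the linear term $(S-2)h_{ijk}$, the differentiated curvature $\sum_{l}\nabla_{l}Q_{ijkl}$ and the remaining commutator curvatures, together with terms carrying the position vector $\langle x,e_{m}\rangle$ and terms carrying the factor $H$. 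After contracting with $h_{ijk}$ and reversing the sign, the linear term gives $(2-S)\sum_{i,j,k}h_{ijk}^{2}$, and the curvature terms, once Gauss \eqref{2.1-3} is substituted, collapse to the two quartic contractions $6\sum h_{ijk}h_{il}h_{jp}h_{klp}$ and $-3\sum h_{ijk}h_{ijl}h_{kp}h_{lp}$.

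The hard part will be the bookkeeping of this last step, specifically showing that no spurious terms survive. The contraction produces terms proportional to $H$ (from $\nabla_{k}\langle x,e_{m}\rangle$ and from the traced Gauss curvature) and terms proportional to $\nabla H$, equivalently to the position vector via $\nabla_{i}H=-\sum_{m}h_{im}\langle x,e_{m}\rangle$ from \eqref{2.1-10}; all of these must vanish since \eqref{2.1-14} contains neither $H$ nor $x$. The $\nabla H$-type terms coming from the explicit position-vector contributions cancel against those generated by the traces $\sum_{l}h_{ill}=\nabla_{i}H$ hidden inside $\sum_{l}\nabla_{l}Q_{ijkl}$, and the $H$-type terms cancel similarly. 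Organizing these cancellations correctly, rather than the mechanical commutation of derivatives, is where I expect the real difficulty to lie.
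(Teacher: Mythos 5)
The paper gives no written proof of Lemma~\ref{lemma 2.3} --- it only remarks that the lemma follows from \eqref{2.1-10} and the Ricci identities --- and your plan is exactly that intended route. Your first two steps are correct and I can verify them: since $S$ is constant, \eqref{2.1-13} gives $\sum_{i,j,k}h_{ijk}^{2}=S(1-S)=\mathrm{const}$, hence $0=\tfrac12\mathcal{L}\bigl(\sum h_{ijk}^{2}\bigr)=\sum h_{ijkl}^{2}+\sum h_{ijk}\mathcal{L}h_{ijk}$; and the Simons-type identity $\mathcal{L}h_{ij}=(S-1)h_{ij}$ does hold with the Lorentzian Gauss equation \eqref{2.1-3} (the cubic terms $\sum_{k,m}h_{im}h_{mk}h_{kj}$ cancel and the $H\sum_k h_{ik}h_{kj}$ term is killed by \eqref{2.1-10}); it is consistent with \eqref{2.1-11} and \eqref{2.1-13} upon tracing.

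The gap is in your third step, which is where the lemma actually lives: you never compute $\mathcal{L}h_{ijk}$, so the coefficients $6$ and $-3$ in \eqref{2.1-14} are asserted rather than derived. More importantly, your stated reason that the $H$-terms and $\langle x,e_m\rangle$-terms ``must vanish since \eqref{2.1-14} contains neither $H$ nor $x$'' is circular --- it assumes the formula you are trying to prove. What has to be shown is that, after writing $\mathcal{L}h_{ijk}=(S-2)h_{ijk}+H\sum_m h_{ijm}h_{mk}+\sum_m\bigl(h_{ijkm}-h_{ijmk}\bigr)\langle x,e_m\rangle+C_{ijk}$ (with $C_{ijk}$ the curvature terms from commuting $\sum_l h_{ijkll}$ past to $\nabla_k\Delta h_{ij}$), the contraction $\sum_{i,j,k}h_{ijk}C_{ijk}$ produces, via Gauss \eqref{2.1-3}, exactly the two quartic contractions \emph{plus} terms in $H$ and in $\nabla H=-\sum_m h_{\cdot m}\langle x,e_m\rangle$ that cancel the explicit ones --- and here one must also use $\sum_{i,j}h_{ij}h_{ijk}=0$ from \eqref{3.1-1}, which your outline never invokes. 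You correctly name the mechanism (traces $\sum_l h_{ill}=\nabla_i H$ and $\sum_l h_{ll}=H$ hidden in $\sum_l\nabla_l Q_{ijkl}$), but until that bookkeeping is actually carried out the identity, and in particular its precise coefficients, is not established.
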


In order to use the maximum principle of Omori-Yau type, we need the following lemma.
\begin{lemma}\label{lemma 2.4}
Let $x:M^{n}\rightarrow \mathbb{R}^{n+1}_{1}$ be an $n$-dimensional complete space-like self-expander in $\mathbb{R}^{n+1}_{1}$. If the squared norm $S$ of the second fundamental form is constant, the Bakry-Emery Ricci tensor $Ric_{V}$ is bounded from below, where $V=x^{\top}$.
\end{lemma}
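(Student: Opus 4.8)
The plan is to compute, in the local orthonormal frame $\{e_i\}$, both the Ricci tensor $Ric_{ij}$ and the symmetric tensor $\tfrac12(L_V g)_{ij}$ explicitly, and then to observe that the contributions containing the mean curvature cancel, leaving a tensor that is manifestly bounded from below. First I would contract the Gauss equation \eqref{2.1-3}: writing $Ric_{ij}=\sum_k R_{ikjk}$ and substituting $R_{ikjk}=-(h_{ij}h_{kk}-h_{ik}h_{kj})$ gives
\begin{equation*}
Ric_{ij}=-H h_{ij}+\sum_k h_{ik}h_{kj}.
\end{equation*}

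Next I would compute the covariant derivative of the tangential field $V=x^{\top}$. Writing $V_j=\langle x,e_j\rangle$ and differentiating, one uses $dx=\sum_k\omega_k e_k$ (so that the ambient derivative of the position vector is the identity on tangent vectors), the structure equation for $de_j$, the identity $\langle x,e_{n+1}\rangle=-H$ from \eqref{1.1-2}, and the skew-symmetry $\omega_{kj}=-\omega_{jk}$ of the connection forms. The connection-form terms then cancel in pairs, and one is left with
\begin{equation*}
\nabla_i V_j=\delta_{ij}-H h_{ij}.
\end{equation*}
Since this is symmetric in $i,j$, it follows that $\tfrac12(L_V g)_{ij}=\nabla_i V_j=\delta_{ij}-H h_{ij}$.

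Combining the two computations, the mean-curvature terms cancel and
\begin{equation*}
(Ric_V)_{ij}=Ric_{ij}-\tfrac12(L_V g)_{ij}=\sum_k h_{ik}h_{kj}-\delta_{ij}.
\end{equation*}
The matrix $\bigl(\sum_k h_{ik}h_{kj}\bigr)$ is the square of the symmetric Weingarten matrix $(h_{ij})$, hence positive semi-definite: for any unit tangent vector $u$ we have $\sum_{i,j}\bigl(\sum_k h_{ik}h_{kj}\bigr)u_i u_j=\sum_i\bigl(\sum_j h_{ij}u_j\bigr)^2\ge 0$, so $(Ric_V)(u,u)\ge -1$. Therefore $Ric_V\ge -g$, which is the desired lower bound.

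I expect the only delicate point to be the sign bookkeeping forced by the time-like unit normal, namely the way $\langle e_{n+1},e_{n+1}\rangle=-1$ enters the Gauss equation and the relation $\langle x,e_{n+1}\rangle=-H$, and in particular the precise cancellation of the $H h_{ij}$ terms between $Ric_{ij}$ and $\tfrac12(L_V g)_{ij}$; once this is in place the conclusion is immediate. Indeed the lower bound does not even use that $S$ is constant. If one does invoke that hypothesis, then $\mathrm{tr}\,(h^2)=S$ bounds the eigenvalues of $\bigl(\sum_k h_{ik}h_{kj}\bigr)$, yielding the two-sided estimate $-g\le Ric_V\le (S-1)g$, but only the lower bound is needed to apply Lemma \ref{lemma 2.1}.
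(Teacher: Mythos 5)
Your proposal is correct and follows essentially the same route as the paper: compute $Ric$ from the Gauss equation and $\tfrac12 L_{x^{\top}}g$ from the self-expander structure, observe that the $Hh_{ij}$ terms cancel, and conclude $Ric_V=\sum_k h_{ik}h_{kj}-\delta_{ij}\geq -g$. Your observation that the constancy of $S$ is not actually needed for the lower bound is also consistent with the paper's argument, which never uses that hypothesis.
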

\begin{proof}
For any unit vector $e\in T M^{n}$, we can
choose a local tangent orthonormal frame field $\{{e_{i}}\}^{n}_{i=1}$ such that $e=e_{i}$. Then by the definition of self-expander, direct computation gives
\begin{equation*}
\begin{aligned}
\frac{1}{2}L_{x^{\top}}g(e_{i},e_{i})
=&\frac{1}{2}x^{\top}(g(e_{i},e_{i}))-g([x^{\top},e_{i}],e_{i})\\
=&g(\nabla_{e_{i}}(x-x^{\bot}),e_{i})\\
=&1-Hg(\nabla_{e_{i}}e_{n+1},e_{i})\\
=&1-Hh_{ii}.
\end{aligned}
\end{equation*}
Then \eqref{2.1-3} yields
\begin{equation*}
\begin{aligned}
Ric_{x^{\top}}(e_{i},e_{i})
=&Ric(e_{i},e_{i})-\frac{1}{2}L_{x^{\top}}g(e_{i},e_{i})\\
=&-(Hh_{ii}-\sum_{j}h^{2}_{ij})-(1-Hh_{ii})\\
=&\sum_{j}h^{2}_{ij}-1\geq-1.
\end{aligned}
\end{equation*}
The proof of the Lemma \ref{lemma 2.4} is finished.
\end{proof}

\vskip5mm
\section{Proof of Main Theorems}

For an arbitrary fixed point $p$ in $M^{n}$, we always choose a local frame field
$\{e_{i} \}_{i=1}^{n}$
such that
$$
h_{ij}=\lambda_i\delta_{ij}, \ \ S=\sum_{i}\lambda_{i}^{2}, \ \ H^{2}\leq nS,
$$
and the equality of the above inequality holds if and only if
$\lambda_{1}=\lambda_{2}=\cdots=\lambda_{n}$.
Taking exterior derivative of $S=\sum_{i,j}\limits (h_{ij})^2$, we know that
\begin{equation}\label{3.1-1}
\sum_{i,j}h_{ij}h_{ijk}=0, \ \
\sum_{i,j}h_{ij}h_{ijkl}+\sum_{i,j}h_{ijk}h_{ijl}=0, \ \ k,l=1,\cdots,n.
\end{equation}
Besides, from Ricci identities \eqref{2.1-7}, we obtain
\begin{equation*}
h_{ijkl}-h_{ijlk}=
-(\lambda_{i}\lambda_{j}\lambda_{k}\delta_{il}\delta_{jk}
-\lambda_{i}\lambda_{j}\lambda_{l}\delta_{ik}\delta_{jl}
+\lambda_{i}\lambda_{j}\lambda_{k}\delta_{ik}\delta_{jl}
-\lambda_{i}\lambda_{j}\lambda_{l}\delta_{il}\delta_{jk}).
\end{equation*}
That is, \begin{equation}\label{3.1-2}
h_{iijj}-h_{jjii}=-\lambda_{i}\lambda_{j}(\lambda_{i}-\lambda_{j}),\ \ h_{iikl}- h_{iilk}=0, \ \ k\neq l.
\end{equation}

In 2015, Cheng and Peng \cite{CP} obtain the rigidity theorems on complete self-shrinkers without the assumption on polynomial volume growth. Motivated by the above work, we obtain the following result on self-expanders.

\begin{theorem}\label{theorem 3.1} (Theorem \ref{theorem 1.1})
Let $x:M^{n}\rightarrow \mathbb{R}^{n+1}_{1}$
be a complete space-like self-expander with the constant squared norm $S$ of the second fundamental form. If $\inf H^{2}>0$, then $S=1$ and $x(M^{n})$ is the hyperbolic space $\mathbb{H}^{n}(\sqrt{n})$ or the hyperbolic cylinder $\mathbb{H}^{k}(\sqrt{k})\times\mathbb{R}^{n-k}, \ \ 1\leq k\leq n-1$.
\end{theorem}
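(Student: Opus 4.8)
The plan is to combine the structure equation \eqref{2.1-13} with the Omori--Yau maximum principle to pin down $S=1$, and then to exploit the self-expander identities \eqref{2.1-10} to classify the resulting parallel hypersurface. Since $S$ is constant we have $\frac12\mathcal{L}S=0$, so \eqref{2.1-13} reads
\[
\sum_{i,j,k}h_{ijk}^{2}=-S(S-1)=S(1-S)\ge 0 .
\]
As $\inf H^{2}>0$ forces $S>0$, this already gives $0<S\le 1$, with equality $S=1$ precisely when the second fundamental form is parallel. Thus the theorem splits into two tasks: (i) proving the reverse bound $S\ge 1$, and (ii) classifying the parallel case.

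For (i) I would apply Lemma \ref{lemma 2.1} to $f=-H^{2}$. This is legitimate: by Lemma \ref{lemma 2.4} the Bakry--Emery tensor $Ric_{V}$ with $V=x^{\top}$ is bounded below, and $f$ is bounded above because the pointwise inequality $H^{2}\le nS$ bounds $H^{2}$ by the \emph{constant} $nS$. Lemma \ref{lemma 2.1} then produces a sequence $\{p_{t}\}$ with $-H^{2}(p_{t})\to\sup(-H^{2})=-\inf H^{2}$, with $|\nabla H^{2}|(p_{t})\to 0$, and with $\mathcal{L}(-H^{2})(p_{t})\le 0$ in the limit, i.e.\ $\mathcal{L}H^{2}(p_{t})\ge 0$ in the limit. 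Here the hypothesis $\inf H^{2}>0$ is essential: along the sequence $H^{2}(p_{t})\to\inf H^{2}>0$, so $|H|$ stays bounded away from $0$, and since $|\nabla H^{2}|=2|H|\,|\nabla H|$ we deduce $|\nabla H|(p_{t})\to 0$. Evaluating \eqref{2.1-12} along $\{p_{t}\}$ and letting $t\to\infty$ gives $\lim_{t}\tfrac12\mathcal{L}H^{2}(p_{t})=\inf H^{2}\,(S-1)\ge 0$, and since $\inf H^{2}>0$ this yields $S\ge 1$. Combined with $S\le 1$ we obtain $S=1$.

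With $S=1$ the displayed identity forces $\sum_{i,j,k}h_{ijk}^{2}=0$, hence $h_{ijk}\equiv 0$: the second fundamental form is parallel, the principal curvatures $\lambda_{1},\dots,\lambda_{n}$ are constant, and $H=\sum_i\lambda_i$ is a nonzero constant (nonzero by $\inf H^{2}>0$). I would then read the curvatures off \eqref{2.1-10}: since $H$ is constant we have $\nabla_j\nabla_i H=0$, so evaluating the second identity of \eqref{2.1-10} in a frame diagonalizing $h_{ij}$ gives $\lambda_{i}(H\lambda_{i}-1)=0$ for every $i$. Hence each $\lambda_{i}$ is either $0$ or $1/H$. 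Writing $k$ for the multiplicity of the value $1/H$, we get $H=k/H$, so $H=\sqrt{k}$ and $S=k/H^{2}=1$, consistent with the above.

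Finally, constancy of the principal curvatures together with $h_{ijk}=0$ makes the eigendistributions parallel, so $x$ is reducible and splits (de Rham type) as a product of a totally umbilic factor of constant nonzero principal curvature $1/\sqrt{k}$ and a totally geodesic flat factor; completeness upgrades this to a global splitting. Matching radii identifies the factors as $\mathbb{H}^{k}(\sqrt{k})$ and $\mathbb{R}^{n-k}$: when $k=n$ this is the totally umbilic $\mathbb{H}^{n}(\sqrt{n})$, and when $1\le k\le n-1$ it is the hyperbolic cylinder $\mathbb{H}^{k}(\sqrt{k})\times\mathbb{R}^{n-k}$, while $k=0$ is excluded by $\inf H^{2}>0$. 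I expect the two main obstacles to be, first, the careful use of $\inf H^{2}>0$ to turn $|\nabla H^{2}|(p_t)\to 0$ into $|\nabla H|(p_t)\to 0$ in step (i), and second, the rigorous passage in the classification from ``constant principal curvatures taking the two values $0$ and $1/\sqrt{k}$'' to the global product models, together with the direct check that each model indeed satisfies $H=-\langle x,e_{n+1}\rangle$.
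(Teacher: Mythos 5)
Your proposal is correct and follows essentially the same route as the paper: it combines the identity $\tfrac12\mathcal{L}S=\sum h_{ijk}^2+S(S-1)=0$ with the Omori--Yau principle (justified by Lemma \ref{lemma 2.4}) to force $S=1$ and $h_{ijk}\equiv 0$, then reads $\lambda_i(H\lambda_i-1)=0$ off \eqref{2.1-10} and classifies the resulting isoparametric hypersurface. The only cosmetic differences are that you apply the maximum principle to $-H^2$ via \eqref{2.1-12} where the paper uses $-H$ via \eqref{2.1-11} (after fixing the sign of $H$), and that you sketch a de Rham splitting for the final rigidity step where the paper cites the congruence theorem of Abe--Koike--Yamaguchi.
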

\begin{proof}
Since $\inf H^{2}>0$, without loss of generality, we can assume that $\inf H>0$.
For the constant $S$, from\eqref{2.1-13} and the Lemma \ref{lemma 2.4}, we infer that
\begin{equation}\label{3.1-3}
\sum_{i,j,k}h_{ijk}^{2}+S(S-1)=0.
\end{equation}
and the Bakry-Emery Ricci curvature of $x:M^{n}\rightarrow \mathbb{R}^{n+1}_{1}$ is bounded from below.
By the maximum principle
of Omori-Yau type concerning the operator $\mathcal{L}$ to the function $-H$, there exists a sequence $\{p_{t}\} \in M^{2}$ such that
$$0\leq\lim_{t\rightarrow\infty}\mathcal{L}H(p_{t})=\inf H(S-1).$$
It is obvious that $S\geq1$, and then $S=1$ and $\sum_{i,j,k}h_{ijk}^{2}=0$ from \eqref{3.1-3}. Namely, the second fundamental form of $x(M^{n})$ is parallel and $H$ is constant. It follows from \eqref{2.1-10} that
$\lambda_{i}(H\lambda_{i}-1)=0, \ 1\leq i\leq n$.
Then we infer that $\lambda_{i}=0$ or $\frac{1}{H}$ for any $1\leq i\leq n$, which indicates
that the number of distinct principal curvatures is at most two.
If $x(M^{n})$ has only one principal curvature, that is, $x(M^{n})$ is totally umbilic, then $x(M^{n})$ is the hyperbolic space $\mathbb{H}^{n}(\sqrt{n})$.
If $x(M^{n})$ has just two distinct constant principal curvatures, by the congruence
theorem of Abe, Koike and Yamaguchi \cite{AKY}, $x(M^{n})$ is
the hyperbolic cylinder $\mathbb{H}^{k}(\sqrt{k})\times\mathbb{R}^{n-k}, \ 1\leq k\leq n-1$.
\end{proof}

In order to prove the main theorem of this paper, we need the following theorem.

\begin{theorem}\label{theorem 3.2}
Let $x:M^{2}\rightarrow \mathbb{R}^{3}_{1}$
be a complete space-like self-expander with the non-zero constant squared norm $S$ of the second fundamental form, then $\inf H^{2}>0$.
\end{theorem}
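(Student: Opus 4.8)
The plan is to argue by contradiction, assuming $\inf H^{2}=0$, and to show this is incompatible with $S$ being a nonzero constant. The first step is elementary: since $S$ is a nonzero constant, $\mathcal{L}S=0$, so \eqref{2.1-13} gives $\sum_{i,j,k}h_{ijk}^{2}=S(1-S)$. As the left-hand side is nonnegative and $S\neq 0$, this forces $0<S\leq 1$ and identifies $|\nabla A|^{2}=\sum_{i,j,k}h_{ijk}^{2}$ as the constant $S(1-S)$. I would then treat $S=1$ and $0<S<1$ separately.

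For $S=1$ the identity above yields $\nabla A\equiv 0$, so the second fundamental form is parallel and $H$ is constant. If $\inf H^{2}=0$ then $H\equiv 0$, whence by the Gauss equation \eqref{2.1-3} the intrinsic curvature is the positive constant $K=\tfrac{1}{2}(S-H^{2})=\tfrac12$; a complete surface of constant curvature $\tfrac12$ is compact by Bonnet--Myers, contradicting the fact that a complete space-like surface in $\mathbb{R}^{3}_{1}$ is noncompact. Hence $H$ is a nonzero constant and $\inf H^{2}>0$.

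The substance is the range $0<S<1$. Here Lemma \ref{lemma 2.4} guarantees $Ric_{V}$ is bounded below, so the Omori--Yau principle (Lemma \ref{lemma 2.1}) applies. Applying it to $H$ and to $-H$ and using \eqref{2.1-11}, I get $(S-1)\sup H\leq 0$ and $(S-1)\inf H\geq 0$; since $S-1<0$ this says $\inf H\leq 0\leq\sup H$, so on the (connected) surface $H$ takes values arbitrarily close to $0$, i.e. $\inf H^{2}=0$ is genuinely approached. Along an Omori--Yau sequence $\{p_{t}\}$ for $-H^{2}$ one has $H(p_{t})\to 0$ while $\lambda_{1}^{2}+\lambda_{2}^{2}=S$ is fixed, so after passing to a subsequence $\lambda_{1}\to\alpha$, $\lambda_{2}\to-\alpha$ with $\alpha^{2}=S/2$; then \eqref{3.1-1} forces the limiting relations $h_{111}=h_{122}$, $h_{112}=h_{222}$ and gives $|\nabla H|^{2}(p_{t})\to S(1-S)$.

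The final step is to feed this configuration into the second-order identity \eqref{2.1-14}. In a frame diagonalizing $h_{ij}$ the two cubic terms there collapse to $6\sum_{i,j,k}\lambda_{i}\lambda_{j}h_{ijk}^{2}$ and $3\sum_{i,j,k}\lambda_{k}^{2}h_{ijk}^{2}$, and evaluating at the limiting configuration $\lambda_{1}=-\lambda_{2}$ together with $\sum_{i,j,k}h_{ijk}^{2}=S(1-S)$ produces
\begin{equation*}
\lim_{t}\sum_{i,j,k,l}h_{ijkl}^{2}=S(1-S)\Bigl(2-\tfrac{5S}{2}\Bigr).
\end{equation*}
Since the left-hand side is nonnegative, this already excludes $S\in(\tfrac45,1)$. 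The main obstacle I anticipate is upgrading this to a contradiction on the \emph{whole} interval $0<S<1$: a single evaluation at the minimal-$H$ configuration is not enough, and one must exploit the global constancy of $|\nabla A|^{2}$ more fully — for instance by running the Omori--Yau principle on $\sum_{i,j,k,l}h_{ijkl}^{2}$ or on a suitable combination of $H^{2}$ and $|\nabla H|^{2}$, or by using that every critical point of $H$ is necessarily umbilic (which follows from \eqref{3.1-1} once $\nabla A$ is nowhere zero) to pin down the global behaviour of $H$. Closing this gap, and thereby ruling out $0<S<1$ entirely, is where I expect the real work to lie; once it is done, the theorem reduces to the $S=1$ analysis above.
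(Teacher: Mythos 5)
Your overall strategy --- arguing along a sequence where $H\to 0$, extracting the limiting configuration $\bar\lambda_{1}=-\bar\lambda_{2}$ with $\bar\lambda_{1}^{2}=S/2$ and $\bar h_{11k}=\bar h_{22k}$, and testing it against \eqref{2.1-14} --- is the same as the paper's, and your preliminary steps are correct: $\sum_{i,j,k}h_{ijk}^{2}=S(1-S)$ forces $0<S\le 1$; the case $S=1$ is easily dispatched (even more directly than via Bonnet--Myers: parallel $A$ and \eqref{2.1-10} give $\lambda_{i}(H\lambda_{i}-1)=0$, so $H\equiv 0$ would force $S=0$); and your evaluation $\lim_{t}\sum h_{ijkl}^{2}=S(1-S)\bigl(2-\tfrac{5S}{2}\bigr)$ is arithmetically right --- at $S=\tfrac13$ it gives the paper's value $\tfrac{7}{27}$. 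But the gap you flag at the end is genuine and is exactly where the substance of the paper's proof lies: the single scalar consequence of \eqref{2.1-14} only rules out $S\in(\tfrac45,1)$, and nothing you have written reaches $S\in(0,\tfrac45]$.

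The missing idea is to compute the individual limits $\bar h_{ijkl}$, not merely their sum of squares, and to pair them with a second, independent constraint. The paper obtains the $\bar h_{ijkl}$ by passing to the limit in three places: the Hessian of $H$ from \eqref{2.1-10} gives $\bar h_{11ij}+\bar h_{22ij}$; the second derivative of the constant $S$ in \eqref{3.1-1} gives $\bar\lambda_{1}\bar h_{11kl}+\bar\lambda_{2}\bar h_{22kl}=-\sum_{i,j}\bar h_{ijk}\bar h_{ijl}$; and the Ricci identity \eqref{3.1-2} handles the mixed-index terms. Solving this linear system expresses every $\bar h_{ijkl}$ in terms of $\bar h_{111}$, $\bar h_{112}$, $\bar\lambda_{1}$ (the paper's \eqref{3.1-10}). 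The second constraint comes from differentiating $\sum_{i,j,k}h_{ijk}^{2}=S(1-S)$ once more, which yields $\sum_{i,j,k}\bar h_{ijk}\bar h_{ijkl}=0$; substituting \eqref{3.1-10} into this produces the algebraic system \eqref{3.1-13}, which forces either $S=0$ (absurd) or, after the case split on $\bar h_{111}\bar h_{112}=0$, the single value $S=\tfrac13$ --- and that value is then eliminated by comparing the direct evaluation $\sum\bar h_{ijkl}^{2}=\tfrac{11}{27}$ with the value $\tfrac{7}{27}$ predicted by \eqref{2.1-14}. Without these additional relations your argument cannot close on $(0,\tfrac45]$, so as written the proof is incomplete.
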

\begin{proof}
We will use the proof by contradiction.
Suppose $\inf H^{2}=0$, there exists a sequence $\{p_{t}\}$ in $M^{2}$ such that
\begin{equation*}
\lim_{t\rightarrow\infty} H^{2}(p_{t})=\inf H^{2}=\bar H^2=0.
\end{equation*}
By \eqref{2.1-13}, \eqref{2.1-14} and $S$ being constant, we know that
$\{h_{ij}(p_{t})\}$,  $\{h_{ijk}(p_{t})\}$ and $\{h_{ijkl}(p_{t})\}$ are bounded sequences, one can assume
$$\lim_{t\rightarrow\infty}h_{ij}(p_{t})=\bar h_{ij}=\bar \lambda_{i}\delta_{ij}, \ \  \lim_{t\rightarrow\infty}h_{ijk}(p_{t})=\bar h_{ijk}, \ \ \lim_{t\rightarrow\infty}h_{ijkl}(p_{t})=\bar h_{ijkl}, \ \ i, j, k, l=1, 2.$$
It follows from \eqref{2.1-10} that
\begin{equation}\label{3.1-4}
\bar h_{11i}+\bar h_{22i}=-\bar \lambda_{i}\lim_{t\rightarrow\infty} \langle X, e_{i} \rangle(p_{t}), \ \ i=1, 2
\end{equation}
and
\begin{equation}\label{3.1-5}
\bar h_{11ij}+\bar h_{22ij}=-\sum_{k}\bar h_{ijk}\lim_{t\rightarrow\infty} \langle X,e_{k} \rangle(p_{t})-\bar \lambda_{i} \delta_{ij}, \ \ i, j=1, 2.
\end{equation}
By use of \eqref{3.1-1}, we obtain
\begin{equation}\label{3.1-6}
\bar \lambda_{1}\bar h_{11k}+\bar \lambda_{2}\bar h_{22k}=0
\end{equation}
and
\begin{equation}\label{3.1-7}
\bar \lambda_{1}\bar h_{11kl}+\bar \lambda_{2}\bar h_{22kl}=-\sum_{ij}\bar h_{ijk}\bar h_{ijl}, \ \ k, l=1, 2.
\end{equation}
\eqref{3.1-2} yields
\begin{equation}\label{3.1-8}
\bar h_{1112}-\bar h_{1121}=0, \ \ \bar h_{2221}-\bar h_{2212}=0.
\end{equation}
Since $\bar H^{2}=0$ and $S$ is non-zero constant, we draw that
$$\bar \lambda_{1}=-\bar \lambda_{2}\neq 0, \ \ S=2\bar \lambda^{2}_{1}.$$
Then \eqref{3.1-4} and \eqref{3.1-6} imply
\begin{equation}\label{3.1-9}
\bar h_{11k}=\bar h_{22k}, \ \ \lim_{t\rightarrow\infty}\langle X, e_{k}\rangle(p_{t})=-\frac{2\bar h_{11k}}{\bar \lambda_{k}},\ \ k=1,2.
\end{equation}
It follows from \eqref{3.1-5}, \eqref{3.1-7} and \eqref{3.1-9} that
\begin{equation*}
\begin{aligned}
&\bar h_{1111}+\bar h_{2211}=\frac{2(\bar h^{2}_{111}-\bar h^{2}_{112})}{\bar \lambda_{1}}-\bar \lambda_{1}, \ \
\bar h_{1122}+\bar h_{2222}=\frac{2(\bar h^{2}_{111}-\bar h^{2}_{112})}{\bar \lambda_{1}}+\bar \lambda_{1},\\
&\bar h_{1112}+\bar h_{2212}=0
\end{aligned}
\end{equation*}
and
\begin{equation*}
\begin{aligned}
&\bar \lambda_{1}(\bar h_{1111}-\bar h_{2211})=-2\bar h^{2}_{111}-2\bar h^{2}_{112}, \ \
\bar \lambda_{1}(\bar h_{1122}-\bar h_{2222})=-2\bar h^{2}_{111}-2\bar h^{2}_{112},\\
&\bar \lambda_{1}(\bar h_{1112}-\bar h_{2212})=-4\bar h_{111}\bar h_{112}.
\end{aligned}
\end{equation*}
Thus,
\begin{equation}\label{3.1-10}
\begin{cases}
\begin{aligned}
&\bar h_{1111}=-\frac{2\bar h^{2}_{112}}{\bar \lambda_{1}}-\frac{\bar \lambda_{1}}{2}, \ \
\bar h_{2211}=\frac{2\bar h^{2}_{111}}{\bar \lambda_{1}}-\frac{\bar \lambda_{1}}{2},\ \
\bar h_{1122}=-\frac{2\bar h^{2}_{112}}{\bar \lambda_{1}}+\frac{\bar \lambda_{1}}{2}, \\
&\bar h_{2222}=\frac{2\bar h^{2}_{111}}{\bar \lambda_{1}}+\frac{\bar \lambda_{1}}{2}, \ \
\bar h_{1112}=-\frac{2\bar h_{111}\bar h_{112}}{\bar \lambda_{1}}, \ \
\bar h_{2212}=\frac{2\bar h_{111}\bar h_{112}}{\bar \lambda_{1}}.
\end{aligned}
\end{cases}
\end{equation}
It follows from \eqref{2.1-13} that
\begin{equation}\label{3.1-11}
\sum_{i,j,k}h_{ijk}^{2}=S(1-S), \ \ \sum_{i,j,k}\bar h_{ijk}\bar h_{ijkl}=0, \ \ l=1, 2.
\end{equation}
Specifically,
\begin{equation*}
\begin{aligned}
&\bar h_{111}\bar h_{1111}+\bar h_{222}\bar h_{2221}+3\bar h_{112}\bar h_{1121}+3\bar h_{221}\bar h_{2211}=0, \\
&\bar h_{111}\bar h_{1112}+\bar h_{222}\bar h_{2222}+3\bar h_{112}\bar h_{1122}+3\bar h_{221}\bar h_{2212}=0.
\end{aligned}
\end{equation*}
Then by \eqref{3.1-8} and \eqref{3.1-9}, we conclude that
\begin{equation}\label{3.1-12}
\begin{aligned}
&\bar h_{111}\bar h_{1111}+\bar h_{112}\bar h_{2212}+3\bar h_{112}\bar h_{1112}+3\bar h_{111}\bar h_{2211}=0, \\
&\bar h_{111}\bar h_{1112}+\bar h_{112}\bar h_{2222}+3\bar h_{112}\bar h_{1122}+3\bar h_{111}\bar h_{2212}=0.
\end{aligned}
\end{equation}
Combining \eqref{3.1-10} with \eqref{3.1-12}, we infer
\begin{equation}\label{3.1-13}
\bar h_{111}(3\bar h^{2}_{111}-3\bar h^{2}_{112}-\bar \lambda^{2}_{1})=0, \ \
\bar h_{112}(3\bar h^{2}_{111}-3\bar h^{2}_{112}+\bar \lambda^{2}_{1})=0.
\end{equation}
If $\bar h_{111}\bar h_{112}\neq0$, \eqref{3.1-13} implies
that $\bar \lambda^{2}_{1}=S=0$.
It is a contradiction.
If $\bar h_{111}\bar h_{112}=0$, we can also obtain contradictions. In fact,
we can declare that $\sum_{i,j,k}\bar h^{2}_{ijk}\neq0$. Otherwise, we know that $\bar h_{ijkl}=0$ for $i,j,k,l=1,2$ from \eqref{2.1-14}.
Then by \eqref{3.1-10}, we infer $\bar \lambda_{1}=0$. It is a contradiction.
Assuming  $\bar h_{111}\neq0, \ \bar h_{112}=0$. From the first equation of \eqref{3.1-11} and \eqref{3.1-13}, we draw
$$\bar h^{2}_{111}=\frac{1}{4}S(1-S), \ \ \bar h^{2}_{111}=\frac{1}{3}\bar \lambda^{2}_{1}=\frac{1}{6}S
.$$
Then
\begin{equation}\label{3.1-14}
S=\frac{1}{3}, \ \ \bar \lambda^{2}_{1}=\frac{1}{6}, \ \ \bar h^{2}_{111}=\frac{1}{18}.
\end{equation}
Consequently
the following relationship
\begin{equation}\label{3.1-15}
\bar h_{1111}=-\frac{\bar \lambda_{1}}{2}, \ \
\bar h_{2211}=\frac{\bar \lambda_{1}}{6},\ \
\bar h_{1122}=\frac{\bar \lambda_{1}}{2}, \ \
\bar h_{2222}=\frac{7\bar \lambda_{1}}{6}, \ \
\bar h_{1112}=\bar h_{2212}=0
\end{equation}
can be derived by the simple calculation from \eqref{3.1-10}.

\noindent
It follows from \eqref{2.1-14}, \eqref{3.1-14} and \eqref{3.1-15} that
\begin{equation*}
\sum_{i,j,k,l}(\bar h_{ijkl})^{2}=\bar h^{2}_{1111}
+\bar h^{2}_{2222}+3\bar h^{2}_{1122}+3\bar h^{2}_{2211}+4\bar h^{2}_{1112}+4\bar h^{2}_{2212}=\frac{11}{27}
\end{equation*}
and
\begin{equation*}
\begin{aligned}
\sum_{i,j,k,l}(\bar h_{ijkl})^{2}
=&(2-S)\sum_{i,j,k}(\bar h_{ijk})^{2}
+6\sum_{i,j,k}\bar \lambda_{i}\bar \lambda_{j}\bar h^{2}_{ijk}-3\sum_{i,j,k}\bar \lambda^{2}_{k}\bar h^{2}_{ijk} \\
=&4(2-S)\bar h^{2}_{111}-12\bar \lambda^{2}_{1}\bar h^{2}_{111}
=\frac{7}{27}.
\end{aligned}
\end{equation*}
This is impossible. Assuming $\bar h_{111}=0, \ \bar h_{112}\neq0$. Using the similar methods to also obtain contradictions. The proof of the Theorem \ref{theorem 3.2} is thus finished.
\end{proof}

\vskip3mm
\noindent
{\it Proof of Theorem \ref{theorem 1.2}}.
If $S=0$, we know that $x: M^{2}\to \mathbb{R}^{3}_{1}$ is a space-like affine plane $\mathbb{R}^{2}$.
If $S\neq0$, it is obvious that
either $x(M^{2})$ is $\mathbb{H}^{1}(1)\times\mathbb {R}^{1}$ or $\mathbb{H}^{2}(\sqrt{2})$ from the Theorem \ref{theorem 3.1} and the Theorem \ref{theorem 3.2}.
\begin{flushright}
$\square$
\end{flushright}

\noindent{\bf Acknowledgements}
The first author was partially supported by the China Postdoctoral Science Foundation Grant No.2022M711074. The second author was partly supported by grant No.12171164 of NSFC, GDUPS (2018), Guangdong Natural Science Foundation Grant No.2023A1515010510.

\end{document}